\theoremstyle{plain}
\newtheorem{thm}{Theorem}[section]
\theoremstyle{remark}
\theoremstyle{definition}
\newtheorem{ex}[thm]{Example}
\newtheorem{assum}[thm]{Assumption}
\newtheorem*{prob1}{Problem $(\mathcal{C}_{\theta})$}
\newtheorem*{prob2}{Problem $(\mathcal{I})$}
\numberwithin{equation}{section}
\numberwithin{figure}{section}
\numberwithin{table}{section}
\algnewcommand\algorithmicto{\textbf{to}}
\title{\Large\bf Inverse stochastic optimal controls}
\author{Yumiharu Nakano\\[1em]
        \small{Department of Mathematical and Computing Science, School of Computing} \\
        \small{Tokyo Institute of Technology} \\
        \small{W8-28, 2-12-1, Ookayama, Meguro-ku, Tokyo 152-8550, Japan} \\
		\small{e-mail: nakano@c.titech.ac.jp}
}
\date{\today}
\begin{document}

\maketitle

\begin{abstract}
We study an inverse problem of the stochastic optimal control of general diffusions with performance index 
having the quadratic penalty term of the control process.  
Under mild conditions on the system dynamics, the cost functions, and the optimal control process, 
we show that our inverse problem is well-posed using a stochastic maximum principle. 
Then, with the well-posedness, we reduce the inverse problem to some root finding problem of the expectation of 
a random variable involved with the value function, which has a unique solution. 
Based on this result, we propose a numerical method for our inverse problem by replacing the expectation above with 
arithmetic mean of observed optimal control processes and the corresponding state processes. 
The recent progress of numerical analysis of Hamilton-Jacobi-Bellman equations enables the proposed method to be 
implementable for multi-dimensional cases. 
In particular, with the help of the kernel-based collocation method for Hamilton-Jacobi-Bellman equations, 
our method for the inverse problems still works well even when an explicit form of the value function is unavailable.   
Several numerical experiments show that the numerical method recovers the unknown 
penalty parameter with high accuracy. 
\begin{flushleft}
{\bf Key words}: 
Inverse problems, stochastic control, stochastic maximum principle, Hamilton-Jacobi-Bellman equations, 
Kernel-based collocation methods. 
\end{flushleft}
\end{abstract}





\section{Introduction}\label{sec:1}

The {\it inverse optimal controls} refers to the problem of determining the performance index that makes a given control law optimal. 
Kalman \cite{kal:1964} first analyses the case of linear quadratic models in infinite horizon in details. 
Since then, a vast amount of research has been done on this subject. 
To name a few, Bellman and Kalaba \cite{bel-kal:1963} and Casti \cite{cas:1980} adopt the dynamic programming 
approach to derive some 
equations for cost functions using the feedback function of the optimal control for finite horizon problems. 
Thau \cite{tha:1967} and Anderson and Moore \cite{and-moo:1989} extend Kalman's results to 
the case of a nonlinear cost structure and of a multi input, respectively. 
In the literature of reinforcement learning, 
Ng and Russell \cite{ng-rus:2000} studies an inverse optimal control in the framework of 
Markov decision processes. See, e.g., 
Abbeel and Ng \cite{abb-ng:2004} and Ziebart et.al \cite{zie-etal:2008} for subsequent studies. 
In continuous-time stochastic optimal control, Dvijotham and Todorov \cite{dvi-tod:2010} works under a linearly solvable 
class of optimal control, where explicit forms of the optimal feedback laws are available, and 
Deng and Krsti{\'c} \cite{den-krs:1997b} and Do \cite{do:2019} adopt Lyapunov function approaches in infinite 
horizon problems. 
See the recent review paper Ab Azar et.al \cite{ab-etal:2020} for a comprehensive account.  

In this paper, we are concerned with an inverse problem of optimal stochastic controls of diffusion processes that 
are more general than those in \cite{dvi-tod:2010} and \cite{den-krs:1997b}. 
Our control problem is described as the minimization problem of 
\[
 \mathbb{E}\left[g(X(T)) + \int_0^T\left(f(t,X(t)) + \theta |u(t)|^2\right)dt\right] 
\]
over an admissible class of control processes $u$, where $\{X(t)\}_{0\le t\le T}$ denotes the controlled 
diffusion process corresponding to $u$. 
The performance of the state is measured by the functions $g$ and $f$, and 
$\theta>0$ is a parameter penalizing the magnitude of the control. See Section \ref{sec:2} below for a precise formulation.  
It should be emphasized that the performance criterion of this form appears in many applications, 
where it is trivial to determine the functions $g$ and $f$, but difficult to choose an appropriate value of $\theta$.  
For example, in trajectory control problems, we seek a state process that is as close as possible to a given curve, 
with a practical input process. 
Thus, in this case, $g$ and $f$ are given by distances between the state and the desired curve. 
Moreover, the penalty term $\theta |u(t)|^2$ for the control is naturally incorporated, which affects the overall performance 
depending on a choice of $\theta$. 
Motivated by this, we consider the problem of determining $\theta$ from given optimal controls for our minimization 
problem with unknown penalty parameter.    

Our first aim is, for the inverse problem above, to study the well-posedness defined by Hadamard \cite{had:1902}. 
It is widely accepted that a modern interpretation of Hadamard's principle is: the solution of a given problem exists, 
unique, and depends continuously on data. 
In our framework, the {\it existence} is interpreted as the surjectivity of the mapping from the set of unknown penalty 
parameters to the set of optimal controls, which is outside the scope of this paper since 
observed controls are assumed to be optimal for the problem above for some 
$\theta> 0$. 
The {\it uniqueness} is nothing but the bijectivity of the mapping. The continuous dependence on data, 
referred to as the {\it stability} in this paper, is the continuity of its inverse in some sense. 
See the statement of Theorem \ref{thm:2.5} below for a precise meaning of our stability.
As one might predict, 
the uniqueness and the stability do not hold in general in our continuous time optimal control problems with finite horizons. 
We will give such an example. 
On the other hand, under additional mild conditions on the drift, the diffusion, and the cost functions, 
and the optimal control process, 
we will show that the uniqueness and the stability do hold using the stochastic maximum principle. 
It should be emphasized that this is a first attempt to reveal sufficient conditions for which the inverse 
optimal control problems are well-posed for stochastic controls with finite horizons.

Our second aim is to propose a numerical method for the inverse problem above. The problem is, given $N$ independent samples 
$\{(u^{(j)}(t_i),X^{(j)}(t_i))\}_{i=0}^n$, $j=1,\ldots,N$, of the pair $\{(u^{*}(t_i),X^{*}(t_i))\}_{i=0}^n$ of the optimal control and 
the corresponding state processes, to determine $\theta$ computationally, where $0=t_0<t_1<\ldots < t_n=T$. 
Our approach is based on the simple optimality condition and is a reduction to 
some root-finding problem. 
This of course involves the value function computations, and so we rely on the recent progress of numerical analysis of 
Hamilton-Jacobi-Bellman equations. Thanks to the uniqueness result of the inverse problem, in our numerical experiments 
performed in Section \ref{sec:3}, the positive root $\theta$'s are recovered with high accuracy.     

The present paper is organized as follows: we introduce notation and formulate our inverse problem in Section \ref{sec:2}. 
Section \ref{sec:3} is devoted to the issue of the well-posedness. 
In Section \ref{sec:4}, we propose a numerical method for our inverse problems and validate it. Section \ref{sec:5} concludes. 

\section{Problem formulation}\label{sec:2}

First, we collect some notation used in this paper. 
Denote by $\mathrm{int}(U)$ the interior of $U$. 
Denote also by $D_{\xi}\varphi$ and $D_{\xi}^2\varphi$ the gradient vector and the Hessian matrix of $\varphi$ with respect to 
a variable $\xi$, respectively.  
We write $\mathbb{S}^d$ for the totality of symmetric $d\times d$ real matrices, and 
$a^{\mathsf{T}}$ for the transpose of a vector or matrix $a$. 
Let $T>0$ and $m\in\mathbb{N}$. 
Let $\{W(t)\}_{0\le t\le T}$ be an $m$-dimensional standard 
Brownian motion on a complete probability space $(\Omega,\mathcal{F},\mathbb{P})$. 
Further, let $X_0$ be a random variable that is independent of $\{W(t)\}_{0\le t\le T}$. 
Denote by $\{\mathcal{F}(t)\}_{0\le t\le T}$ the augmented filtration  
generated by $\{W(t)\}_{0\le t\le T}$ and $X_0$. 
The control processes are assumed to take values in a closed set 
$U$ of $\mathbb{R}^k$. 
The class $\mathcal{U}$ of controls is then defined by the set of all 
$U$-valued $\{\mathcal{F}(t)\}$-adapted processes $\{u(t)\}_{0\le t\le T}$ satisfying 
\[
 \mathbb{E}\int_0^T|u(t)|^2dt < \infty. 
\]

For any given $u\in\mathcal{U}$, consider the $d$-dimensional controlled stochastic differential equation 
\begin{equation}
\label{eq:1.1}
 dX(t) = b(t,X(t),u(t))dt + \sigma(t,X(t),u(t))dW(t) 
\end{equation}
with initial condition $X_0$, where the functions $b$ and $\sigma$ are assumed to be measurable such that 
the existence and uniqueness of \eqref{eq:1.1} is guaranteed. That is, we assume that for any $u\in\mathcal{U}$ there 
exists a unique continuous $\{\mathcal{F}(t)\}$-adapted process $\{X(t)\}_{0\le t\le T}$ satisfying 
\[
 X(t) =X_0 + \int_0^tb(s,X(s),u(s))ds + \int_0^t \sigma(s,X(s),u(s))dW(s), \quad 0\le t\le T, 
\]
almost surely. 
The functions $g:\mathbb{R}^d\to\mathbb{R}$ and $f:[0,T]\times\mathbb{R}^d\to\mathbb{R}$, the cost functions introduced in 
Section \ref{sec:1}, are both assumed to be measurable such that 
\[
 \mathbb{E}\left[g(X(T)) + \int_0^Tf(t,X(t))dt\right] > -\infty
\]
for any $u\in\mathcal{U}$. Then, 
we are concerned with the following optimal control problem: 
\begin{prob1}
Minimize  
\begin{equation*}
 J[u] := \mathbb{E}\left[g(X(T)) + \int_0^T\left\{f(t,X(t))+\theta|u(t)|^2\right\}dt\right] 
\end{equation*}
over $u\in\mathcal{U}$. 
\end{prob1}
Here we have assumed that the running cost is decomposed into that for the state and that for the penalty of 
the control input with weight parameter $\theta> 0$. 
Then, we consider the inverse problem with respect to $\theta$ given an optimal control and the corresponding state trajectory, 
i.e., our problem is to recover $\theta$ from a solution $\{u^*(t)\}\in\mathcal{U}$ for $(\mathcal{C}_{\theta})$ 
and the corresponding state 
process $\{X^*(t)\}$ under the assumption that $b$, $\sigma$, $g$ and $f$ are known. 

\section{Well-posedness}\label{sec:3}

We shall discuss Hadamard's well-posedness of the inverse problem of the 
stochastic optimal control. 
As stated in Section \ref{sec:1}, we skip the existence issue. 
Then, as in many other inverse problems, the uniqueness and the stability  
do not hold in general for continuous-time optimal controls. 
\begin{ex}
Consider the state equation 
\[
 \frac{dX(t)}{dt} = u^2(t)X(t), 
\]
with nonrandom initial condition $X_0$. 
The control processes $\{u(t)\}$ are taken from the class of $\mathbb{R}$-valued Borel functions on $[0,T]$. 
The objective function $J[u]$ is assumed to be given by 
\[
 J[u] = X(T) + \theta \int_0^Tu^2(t)dt, 
\]
where $\theta> 0$. Since $X(T)=X_0e^{\int_0^Tu^2(t)dt}$, we obtain 
$\inf_uJ[u]=\min_{\gamma\ge 0}\{X_0e^{\gamma}+\theta\gamma\}$. 
Thus, if $X_0\ge 0$ then $\gamma\equiv 0$, i.e., $u\equiv 0$, is optimal for any problem 
$(\mathcal{C}_{\theta})$ with $\theta> 0$. 
\end{ex}

To discuss the uniqueness, we impose the following: 
\begin{assum}
\label{assum:2.2}
\begin{enumerate}[\rm (i)]
\item The random variable $X_0$ is a constant. 
\item The set $U$ is compact and $\mathrm{int}(U)$ is nonempty. 
\item The functions $b$, $\sigma$, $f$, and $g$ are of $C^2$-class in $x$. 
\item There exists a constant $C_0>0$ and a modulus of continuity $\rho:[0,\infty)\to [0,\infty)$ such that 
for $\varphi(t,x,u) = b(t,x,u)$, $\sigma(t,x,u)$, $f(t,x)$, $g(x)$, we have   
\begin{align*}
 |\varphi(t,x,u)-\varphi(t,x^{\prime},u^{\prime})|
  &\le C_0|x-x^{\prime}| + \rho(|u-u^{\prime}|), \\ 
  |\varphi(t,0,u)|&\le C_0,  \\
 |D_x\varphi(t,x,u) - D_x\varphi(t,x^{\prime},u^{\prime})| 
  &\le C_0|x-x^{\prime}| + \rho(|u-u^{\prime}|), \\ 
 |D_x^2\varphi(t,x,u) - D_x^2\varphi(t,x^{\prime},u^{\prime})|
  &\le \rho(|x-x^{\prime}| + |u-u^{\prime}|),   
\end{align*}
for $t\in [0,T]$, $x,x^{\prime}\in\mathbb{R}^d$, and $u,u^{\prime}\in U$. 
\end{enumerate}
\end{assum}

It should be noted that Assumption \ref{assum:2.2} (i) means that the filtration $\{\mathcal{F}(t)\}_{0\le t\le T}$ is 
generated by the Brownian motion only. 
Assumption \ref{assum:2.2} (ii) excludes the case where $U$ is countable. 
It should be noted that whether this assumption holds true or not is easily confirmed in practice. 
The first two conditions in Assumption \ref{assum:2.2} (iv) are standard ones for stochastic optimal control problems. 
Under these two requirements, there exists a unique solution $\{X(t)\}_{0\le t\le T}$ of 
\eqref{eq:1.1} for each $u\in\mathcal{U}$. See Fleming and Soner \cite{fle-son:2006}. 

Under Assumption \ref{assum:2.2}, 
we can apply a stochastic maximum principle, as described in Yong and Zhou \cite{yon-zho:1999}, 
to obtain the uniqueness result for our inverse problem in the following sense:  
\begin{thm}
\label{thm:2.3}
Let Assumption $\ref{assum:2.2}$ hold. 
Suppose that $\{u^*(t)\}_{0\le t\le T}\in\mathcal{U}$ is optimal both for the problem 
$(\mathcal{C}_{\theta_1})$ and $(\mathcal{C}_{\theta_2})$ for some $\theta_1,\theta_2>0$. 
Moreover, suppose that 
$\mathbb{P}(u^*(t_0)\in \mathrm{int}(U)\setminus\{0\})>0$ for some $t_0\in [0,T]$. 
Then $\theta_1=\theta_2$.   
\end{thm}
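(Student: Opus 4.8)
The plan is to apply the stochastic maximum principle of Yong and Zhou to each of the two problems $(\mathcal{C}_{\theta_1})$ and $(\mathcal{C}_{\theta_2})$ separately and to exploit that the control $u^*$, and hence the state process $X^*$, are common to both. Since the diffusion $\sigma$ is allowed to depend on the control, I would use the general (second-order) form of the principle. Introduce the Hamiltonian
\[
 H(t,x,u,p,q) = \langle p, b(t,x,u)\rangle + \mathrm{tr}\!\big[q^{\mathsf{T}}\sigma(t,x,u)\big] - f(t,x) - \theta|u|^2,
\]
the first-order adjoint pair $(p,q)$ solving $dp(t) = -D_xH(t,X^*(t),u^*(t),p(t),q(t))\,dt + q(t)\,dW(t)$ with $p(T) = -D_xg(X^*(T))$, and the associated second-order adjoint. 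Under Assumption \ref{assum:2.2} the coefficients carry the $C^2$ regularity and Lipschitz bounds needed for these equations to be well-posed.

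The crucial observation — and the real content of the statement — is that the adjoint processes do not depend on $\theta$. The only $\theta$-dependent term in $H$ is $-\theta|u|^2$, which is free of $x$; hence $D_xH$ and the terminal datum $-D_xg(X^*(T))$ are independent of $\theta$. Because the driver of the adjoint equation is evaluated along the common pair $(X^*,u^*)$, the backward equation for $(p,q)$ is literally identical for $\theta_1$ and $\theta_2$, so by uniqueness the two solutions coincide, and likewise for the second-order adjoint. Next I would write down the necessary optimality condition. At a point $(t,\omega)$ with $u^*(t)\in\mathrm{int}(U)$, the $\mathcal{H}$-function of the principle attains its maximum over $U$ at the interior point $u^*(t)$, so its $u$-gradient vanishes there. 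The second-order correction is quadratic in $\sigma(t,X^*,v)-\sigma(t,X^*,u^*)$ and thus has vanishing $v$-derivative at $v=u^*$; the stationarity condition therefore reduces to $D_uH(t,X^*(t),u^*(t),p(t),q(t))=0$, i.e.
\[
 \Phi(t) := (D_ub)^{\mathsf{T}}p(t) + D_u\,\mathrm{tr}\!\big[q(t)^{\mathsf{T}}\sigma\big] = 2\theta\,u^*(t),
\]
with $b,\sigma$ and their derivatives evaluated at $(t,X^*(t),u^*(t))$. Since $(p,q)$, $X^*$ and $u^*$ are $\theta$-free, so is $\Phi$. Applying this with $\theta_1$ and $\theta_2$ and subtracting gives $(\theta_1-\theta_2)\,u^*(t)=0$ for $dt\times d\mathbb{P}$-almost every $(t,\omega)$ at which $u^*(t)\in\mathrm{int}(U)$.

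To finish: if $\theta_1\neq\theta_2$, this identity forces $u^*=0$ wherever $u^*\in\mathrm{int}(U)$, so that $\{(t,\omega):u^*(t)\in\mathrm{int}(U)\setminus\{0\}\}$ is $dt\times d\mathbb{P}$-null, which is meant to contradict the hypothesis $\mathbb{P}(u^*(t_0)\in\mathrm{int}(U)\setminus\{0\})>0$. The step I expect to be the main obstacle is exactly this passage from a single time $t_0$ to a set of positive $dt\times d\mathbb{P}$-measure: one time slice carries no Lebesgue mass, so some time-regularity of $u^*$ is required. I would supply it by establishing that $t\mapsto u^*(t)$ is continuous in probability; then, $\mathrm{int}(U)\setminus\{0\}$ being open, the map $t\mapsto\mathbb{P}(u^*(t)\in\mathrm{int}(U)\setminus\{0\})$ is lower semicontinuous at $t_0$ and hence strictly positive on a neighborhood of $t_0$, and Fubini delivers the positive measure that rules out $\theta_1\neq\theta_2$. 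The secondary technical care point is the rigorous reduction to $D_uH=0$ in the control-dependent-diffusion case, namely confirming that the second-order adjoint genuinely drops out of the first-order condition.
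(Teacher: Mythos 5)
Your proposal follows essentially the same route as the paper: apply the second-order stochastic maximum principle to each problem, observe that the first- and second-order adjoint BSDEs are independent of $\theta$ (since $-\theta|u|^2$ carries no $x$-dependence, so their drivers and terminal data coincide and uniqueness gives $p_1=p_2$, $P_1=P_2$), and subtract the interior stationarity conditions to obtain $(\theta_1-\theta_2)u^*=0$ where $u^*\neq 0$. The only divergence is that the paper applies the stationarity condition directly at the single time $t_0$ on the positive-probability event $\{u^*(t_0)\in\mathrm{int}(U)\setminus\{0\}\}$ without addressing the a.e.-$t$ caveat you flag, and your remark that the second-order adjoint drops out of the first-order condition is consistent with (indeed implicit in) the paper's computation.
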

\begin{proof}
Step (i). 
Since $(X^*(t),u^*(t))$ is optimal both for $(\mathcal{C}_{\theta_1})$ and $(\mathcal{C}_{\theta_2})$, 
for each $\theta=\theta_1$, $\theta_2$, there exists a unique solution $(p(t),q(t))$, $0\le t\le T$, of the backward stochastic 
differential equation (BSDE) 
\begin{equation}
\label{eq:2.1}
\begin{aligned}
 dp(t) &= -\bigg\{D_xb(t,X^*(t),u^*(t))^{\mathsf{T}}p(t) + \sum_{j=1}^m
  D_x\sigma_j(t,X^*(t),u^*(t))^{\mathsf{T}}q_j(t)  \\ 
  &\hspace*{2.5em} - D_xf(t,X^*(t))\bigg\}dt + q(t)dW(t), \\ 
 p(T) & = -D_xg(X^*(T)), 
\end{aligned}
\end{equation}
as well as there exists a unique solution $(P(t),Q(t))$, $0\le t\le T$, of the BSDE 
\begin{equation}
\label{eq:2.2}
\begin{aligned}
 dP(t) &= -\bigg\{D_xb(t,X^*(t),u^*(t))^{\mathsf{T}}P(t) + P(t)D_xb(t,X^*(t),u^*(t))^{\mathsf{T}} \\ 
  &\hspace*{2.5em} 
      + \sum_{j=1}^mD_x\sigma_j(t,X^*(t),u^*(t))^{\mathsf{T}}P(t)D_x\sigma_j(t,X^*(t),u^*(t))  \\ 
  &\hspace*{2.5em} 
      + \sum_{j=1}^m\Big\{D_x\sigma_j(t,X^*(t),u^*(t))^{\mathsf{T}}Q_j(t) 
      + Q_j(t)D_x\sigma_j(t,X^*(t),u^*(t))\Big\}  \\ 
  &\hspace*{2.5em} + D_x^2H_0(t,X^*(t),u^*(t),p(t),q(t))\bigg\}dt + \sum_{j=1}^mQ_j(t)dW(t), \\ 
 P(T) & = -D_x^2g(X^*(T)), 
\end{aligned} 
\end{equation} 
where $q(t)=(q_1(t),\ldots,q_m(t))$, $Q(t)=(Q_1(t),\ldots,Q_m(t))$, $\sigma_j(t,x,u)\in\mathbb{R}^d$ for each $j=1,\ldots,m$ 
such that $\sigma(t,x,u)=(\sigma_1(t,x,u),\ldots,\sigma_m(t,x,u))$, and 
\[
 H_0(t,x,u,p,q):= p^{\mathsf{T}}b(t,x,u)+\mathrm{tr}(q^{\mathsf{T}}\sigma(t,x,u)) 
  - f(t,x) -\theta|u|^2. 
\]
In particular, $p(t)$, $q_1(t),\ldots, q_m(t)$ are $\mathbb{R}^d$-valued and 
$Q_1(t),\ldots,Q_m(t)$ are $\mathbb{S}^{d}$-valued, all of which are adapted processes satisfying 
\[
 \mathbb{E}\int_0^T|\varphi(t)|^2dt < \infty 
\]
for $\varphi=p,q_1,\ldots,q_m, Q_1,\ldots,Q_m$. 
Moreover, with the generalized Hamiltonian 
\begin{align*}
\mathcal{H}(t,x,u) &:=H_0(t,x,u,p(t),q(t)) - \frac{1}{2}
 \mathrm{tr}\left[\sigma(t,X^*(t),u^*(t))^{\mathsf{T}}P(t)\sigma(t,X^*(t),u^*(t))\right] \\ 
 &\quad + \frac{1}{2}\mathrm{tr}\left\{[\sigma(t,x,u)-\sigma(t,X^*(t),u^*(t))]^{\mathsf{T}}P(t)
    [\sigma(t,x,u)-\sigma(t,X^*(t),u^*(t))]\right\}  
\end{align*}
we have 
\begin{equation}
\label{eq:2.3}
 \mathcal{H}(t,X^*(t),u^*(t))=\max_{u\in U}\mathcal{H}(t,X^*(t),u). 
\end{equation}
See Chapter 3 in \cite{yon-zho:1999}. 

Step (ii).  
We write $(p_i(t),q_i(t),P_i(t),Q_i(t))$ for the corresponding $(p(t),q(t),P(t),Q(t))$ for $\theta=\theta_i$ where $i=1,2$. 
Similarly, we write $\mathcal{H}_i$ for the corresponding $\mathcal{H}$ for $\theta=\theta_i$ where $i=1,2$.  
By the optimality condition \eqref{eq:2.3} and our assumptions, we obtain 
\[
 D_u\mathcal{H}_i(t_0,X^*(t_0),u^*(t_0)) = 0, \quad i=1,2, 
\]
with positive probability. So, 
\begin{align*}
 0 &= K(t_0,X^*(t_0),u^*(t_0),p_1(t),P_1(t)) - 2\theta_1u^*(t_0) \\ 
 &= K(t_0,X^*(t_0),u^*(t_0),p_2(t),P_2(t)) - 2\theta_2u^*(t_0)
\end{align*}
where for $t\in [0,T]$, $x, p\in\mathbb{R}^d$, and $P\in\mathbb{R}^{d\times d}$, 
\begin{align*}
 &K(t,x,u,p,P) \\ 
 &=D_u\Big[\frac{1}{2}\mathrm{tr}\left(\sigma(t,x,u)^{\mathsf{T}}P\sigma(t,x,u)\right) 
  + p^{\mathsf{T}}b(t,x,u)  \\ 
  &\quad + \frac{1}{2}\mathrm{tr}\left\{[\sigma(t,x,u)-\sigma(t,X^*(t),u^*(t))]^{\mathsf{T}}P(t)
    [\sigma(t,x,u)-\sigma(t,X^*(t),u^*(t))]\right\} \Big].  
\end{align*}
Since the uniqueness of the BSDEs immediately leads to $p_1=p_2$ and $P_1=P_2$, the equalities above yield 
$\theta_1=\theta_2$, as claimed. 
\end{proof}

To study the stability, we impose additional conditions. 
\begin{assum}
\label{assum:2.4}
The functions $b$ and $\sigma$ are of $C^1$-class in $u$. Further, 
for $\varphi(t,x,u) = b(t,x,u)$, $\sigma(t,x,u)$, $f(t,x)$, $g(x)$, we have   
\begin{align*}
 |\varphi(t,x,u)|+|D_u\varphi(t,x,u)|&\le C_0, \\ 
 |D_u\varphi(t,x,u) - D_u\varphi(t,x^{\prime},u^{\prime})|
  &\le \rho(|x-x^{\prime}| + |u-u^{\prime}|), \quad 
  x,x^{\prime}\in\mathbb{R}^d, \;\; u,u^{\prime}\in U, 
\end{align*}
where $C_0$ and $\rho$ are as in Assumption \ref{assum:2.2}. 
\end{assum}

Then, again by the stochastic maximum principle, we have the stability result in the following sense: 
\begin{thm}
\label{thm:2.5}
Let Assumptions $\ref{assum:2.2}$ and $\ref{assum:2.4}$ hold. 
Let $\{u^*(t)\}_{0\le t\le T}\in\mathcal{U}$ be optimal for the problem $(\mathcal{C}_{\theta^*})$ for some $\theta^*>0$, and 
for each $n\in\mathbb{N}$ let $\{u_n(t)\}_{0\le t\le T}\in\mathcal{U}$ be optimal for $(\mathcal{C}_{\theta_n})$ for some $\theta_n>0$. 
Suppose that 
\[
 \mathbb{E}\int_0^T|u_n(t)-u^*(t)|^2dt\to 0, \quad n\to\infty, 
\]  
and that there exist $n_0\in\mathbb{N}$ and a measurable set 
$E\subset [0,T]\times\Omega$ with positive measure satisfying 
\[
 u_n(t,\omega), \; u^*(t,\omega)\in\mathrm{int}(U)\setminus\{0\}, \quad (t,\omega)\in E, 
\]
for $n\ge n_0$. 
Then we have $\lim_{n\to\infty}\theta_n=\theta^*$. 
\end{thm}
\begin{proof}
By $C$ we denote a generic positive constant that may vary from line to line. 
For any $n\ge n_0$, by \eqref{eq:2.3}, 
\begin{equation}
\label{eq:2.4}
 2\theta_n u_n(t,\omega) - 2\theta^* u^*(t,\omega) = L_n(t,\omega) - L^*(t,\omega), \quad (t,\omega)\in E, 
\end{equation}
where $L_n(t,\omega)=K(t,X_n(t,\omega), u_n(t,\omega),p_n(t,\omega),P_n(t,\omega))$ with 
$X_n$ being the state process corresponding to $u_n$, and $(p_n,q_n)$ and $(P_n,Q_n)$ the solutions of the BSDE 
\eqref{eq:2.1} and \eqref{eq:2.2} with $u=u_n$ and $X=X_n$, respectively. 
$L^*$ is similarly defined. The usual arguments with Gronwall's lemma yield 
\begin{equation}
\label{eq:2.5}
 \sup_{0\le t\le T}\mathbb{E}|X_n(t)-X^*(t)|^2\le C\mathbb{E}\int_0^T\rho(|u_n(t)-u^*(t)|)^2dt. 
\end{equation}
Since $\rho$ is a modulus of continuity for $D_x^2b(t,\cdot,u)$ uniformly in $t$ and $u$, we may assume that 
$\rho$ is bounded. Hence, for any $\varepsilon>0$ there exists $\delta>0$ such that 
$\rho(r)\le \varepsilon + \delta r$, $r\ge 0$. From this and the $L^2$-convergence of $u_n$ we obtain 
\begin{equation}
\label{eq:2.6}
\sup_{0\le t\le T}\mathbb{E}|X_n(t)-X^*(t)|^2\to 0, \quad n\to\infty. 
\end{equation}
Furthermore, by the a priori estimates for the BSDE (see, e.g., \cite[Theorem 3.3, Chapter 7]{yon-zho:1999}), 
\begin{align*}
 &\mathbb{E}\int_0^T|p_n(t)-p^*(t)|^2dt  \\ 
 &\le C\mathbb{E}|D_xg(X_n(T))-D_xg(X^*(T))|^2 \\ 
 &\quad + C\mathbb{E}\int_0^T|D_xb(t,X_n(t),u_n(t))-D_xb(t,X^*(t),u^*(t))|^2|p^*(t)|^2dt  \\ 
 &\quad + C\mathbb{E}\int_0^T|D_x\sigma(t,X_n(t),u_n(t))-D_x\sigma(t,X^*(t),u^*(t))|^2|q^*(t)|^2dt  \\
 &\quad + C\mathbb{E}\int_0^T|D_xf(t,X_n(t))-D_xf(t,X^*(t))|^2 dt. 
\end{align*}
By Assumption \ref{assum:2.2} (iv) and \eqref{eq:2.5}, 
\begin{align*}
 &\mathbb{E}|D_xg(X_n(T))-D_xg(X^*(T))|^2 
 + \mathbb{E}\int_0^T|D_xf(t,X_n(t))-D_xf(t,X^*(t))|^2 dt \\ 
& \le C\mathbb{E}\int_0^T\rho(|u_n(t)-u^*(t)|)^2dt \to 0, \quad n\to\infty. 
\end{align*}
Using Assumption \ref{assum:2.2} (iv) again, for $\varepsilon>0$, we observe 
\begin{align*}
 &\mathbb{E}\int_0^T|D_xb(t,X_n(t),u_n(t))-D_xb(t,X^*(t),u^*(t))|^2|p^*(t)|^2dt \\ 
 &\le \mathbb{E}\int_0^T|D_xb(t,X_n(t),u_n(t))-D_xb(t,X^*(t),u^*(t))|^2|p^*(t)|^2 \\ 
 &\hspace*{3em} \times 1_{\{|X_n(t)-X^*(t)|+\rho(|u_n(t)-u^*(t)|)>\varepsilon\}}dt \\ 
 &\quad +  \mathbb{E}\int_0^T|D_xb(t,X_n(t),u_n(t))-D_xb(t,X^*(t),u^*(t))|^2|p^*(t)|^2 \\ 
 &\hspace*{3em} \times 1_{\{|X_n(t)-X^*(t)|+\rho(|u_n(t)-u^*(t)|)\le \varepsilon\}}dt \\ 
 &\le C\mathbb{E}\int_0^T|p^*(t)|^21_{\{|X_n(t)-X^*(t)|+\rho(|u_n(t)-u^*(t)|)>\varepsilon\}}dt 
     + C\varepsilon \mathbb{E}\int_0^T|p^*(t)|^2 dt.   
\end{align*} 
Thus, letting $n\to\infty$ and then $\varepsilon\to 0$, we have 
\[
 \lim_{n\to\infty}\mathbb{E}\int_0^T|D_xb(t,X_n(t),u_n(t))-D_xb(t,X^*(t),u^*(t))|^2|p^*(t)|^2dt=0. 
\]
Similarly, 
\[
 \lim_{n\to\infty}\mathbb{E}\int_0^T|D_x\sigma(t,X_n(t),u_n(t))-D_x\sigma(t,X^*(t),u^*(t))|^2|q^*(t)|^2dt=0. 
\]
Therefore, 
\begin{equation}
\label{eq:2.7}
 \lim_{n\to\infty}\mathbb{E}\int_0^T|p_n(t)-p^*(t)|^2 = 0. 
\end{equation}
By the same way, we obtain 
\begin{equation}
\label{eq:2.8}
 \lim_{n\to\infty}\mathbb{E}\int_0^T|P_n(t)-P^*(t)|^2 =0.  
\end{equation}
For notational simplicity we denote $D_ub_n(t)=D_ub(t,X_n(t),u_n(t))$. Analogously we use the notation 
$D_ub^*(t)$, $\sigma_n(t)$, and $D_u\sigma_n(t)$. With this notation, by Assumption \ref{assum:2.4} we see 
\begin{align*}
 &|L_n(t)-L^*(t)| \\ 
 &\le |D_ub_n(t)||p_n(t)-p^*(t)| + |p^*(t)||D_ub_n(t)-D_ub^*(t)| \\ 
 &\quad + C|\sigma_n(t)||P_n(t)||D_u\sigma_n(t)-D_u\sigma^*(t)| 
    + C|D_u\sigma^*(t)||P_n(t)||\sigma_n(t)-\sigma^*(t)| \\ 
 &\quad + |D_u\sigma^*(t)||\sigma^*(t)|)|P_n(t)-P^*(t)| \\ 
 &\le C(1+|X_n(t)| + |u_n(t)|)(|p_n(t)-p^*(t)|+|P_n(t)-P^*(t)|) \\ 
 &\quad + C_{\varepsilon}(1+|p^*(t)|+|p_n(t)|)(\varepsilon + |X_n(t)-X^*(t)| + |u_n(t)-u^*(t)|)
\end{align*}
for any $\varepsilon>0$ with constant $C_{\varepsilon}$ depending on $\varepsilon$. 
Thus, by Cauchy-Schwartz inequality and \eqref{eq:2.6}--\eqref{eq:2.8}, 
\[
 \lim_{n\to\infty}\mathbb{E}\int_0^T|L_n(t)-L^*(t)|dt = 0. 
\]
From this and \eqref{eq:2.4} it follows that  
\[
 2|\theta_n-\theta^*|\int_E|u_n|dt\times d\mathbb{P}\le 2|\theta^*|\mathbb{E}\int_0^T|u_n(t)-u^*(t)|dt + 
 \mathbb{E}\int_0^T|L_n(t)-L^*(t)|dt \to 0, \quad n\to\infty. 
\]
On the other hand, 
\[
 \lim_{n\to\infty}\int_E|u_n|dt\times d\mathbb{P} = \int_E|u^*|dt\times d\mathbb{P}>0. 
\]
Therefore, $\theta_n\to \theta^*$, as claimed.  
\end{proof}

Next we consider the case of the linear quadratic regulator problems. We need special treatment since 
Assumptions \ref{assum:2.2} and \ref{assum:2.4} (iii) exclude the case where the state processes are 
affine in controls taking values in unbounded sets.  

\begin{assum}
\label{assum:2.6}
\begin{enumerate}[\rm (i)]
\item The random variable $X_0$ satisfies $\mathbb{E}|X_0|^2<\infty$. 
\item The set $U$ is given by $U=\mathbb{R}^k$. 
\item The functions $b$, $\sigma$, $f$, and $g$ are given respectively by 
\begin{gather*}
 b(t,x,u) = b_0(t)x + b_1(t)u, \quad \sigma(t,x,u)=\sigma_0(t), \\ 
 f(t,x) = x^{\mathsf{T}}S(t)x, \quad g(x) = x^{\mathsf{T}}Rx,   
\end{gather*}
for $t\in [0,T]$, $x\in\mathbb{R}^d$, and $u\in\mathbb{R}^k$, where 
$b_0$ is $\mathbb{R}^{d\times d}$-valued, $b_1$ is $\mathbb{R}^{d\times k}$-valued, $\sigma_0$ is 
$\mathbb{R}^{d\times m}$, and $S$ is $\mathbb{R}^{d\times d}$-valued, all of which are continuous on $[0,T]$, and 
$R\in\mathbb{R}^{d\times d}$. 
Further $R$ and $S(t)$ are positive semidefinite for any $t\in [0,T]$.   
\end{enumerate}
\end{assum}

In the linear quadratic cases, explicit forms of the optimal controls are available. 
This helps us to obtain the following uniqueness:  
\begin{thm}
\label{thm:2.7}
Let Assumption $\ref{assum:2.6}$ hold. 
Suppose that $\{u^*(t)\}_{0\le t\le T}\in\mathcal{U}$ is optimal both for the problem 
$(\mathcal{C}_{\theta_1})$ and $(\mathcal{C}_{\theta_2})$ for some $\theta_1,\theta_2>0$. 
Moreover, suppose that 
$\mathbb{P}(u^*(t_0)\neq 0)>0$ for some $t_0\in [0,T]$. 
Then $\theta_1=\theta_2$.   
\end{thm}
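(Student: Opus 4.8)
The plan is to reuse the first-order optimality condition exploited in Theorem \ref{thm:2.3}, but now in the linear-quadratic regime where $U=\mathbb{R}^k$ is unbounded and the growth hypotheses of Assumption \ref{assum:2.2} fail; the decisive simplification is that here the first-order adjoint does not depend on $\theta$. Since $u^*$ is common to both $(\mathcal{C}_{\theta_1})$ and $(\mathcal{C}_{\theta_2})$, the controlled state $X^*$ solving \eqref{eq:1.1} is common as well. Specializing the adjoint BSDE \eqref{eq:2.1} to the data of Assumption \ref{assum:2.6}, where $D_xb(t,x,u)=b_0(t)$, $D_x\sigma\equiv 0$, $D_xf(t,x)=2S(t)x$, and $D_xg(x)=2Rx$, I obtain the linear BSDE
\begin{equation*}
 dp(t) = -\left\{b_0(t)^{\mathsf{T}}p(t) - 2S(t)X^*(t)\right\}dt + q(t)\,dW(t), \quad p(T) = -2RX^*(T),
\end{equation*}
whose coefficients and terminal value involve only $X^*$ and the known data, but never $\theta$. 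Writing $(p_i,q_i)$ for the solution attached to $\theta_i$, uniqueness of solutions of this BSDE therefore gives $p_1=p_2=:p$ and $q_1=q_2=:q$.

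Next I would record the stationarity condition. Because $\sigma=\sigma_0(t)$ depends on neither $x$ nor $u$, the quadratic correction in the generalized Hamiltonian $\mathcal{H}$ vanishes identically, so maximizing $\mathcal{H}(t,X^*(t),\cdot)$ over $u\in\mathbb{R}^k$ reduces to maximizing the strictly concave map $u\mapsto p(t)^{\mathsf{T}}b_1(t)u-\theta_i|u|^2$. Its maximizer is interior, and stationarity yields
\begin{equation*}
 u^*(t) = \frac{1}{2\theta_i}\,b_1(t)^{\mathsf{T}}p(t), \quad i=1,2,
\end{equation*}
for a.e.\ $t$, almost surely, with the \emph{same} $p$ for both indices. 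I would justify this necessary condition either by the maximum principle for linear-quadratic problems (Yong and Zhou \cite{yon-zho:1999}, Chapter 6) or by a direct convex variational argument, which is available precisely because the term $\theta_i|u|^2$ renders the cost coercive and strictly convex in $u$.

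Subtracting the two identities gives $\left(\tfrac{1}{2\theta_1}-\tfrac{1}{2\theta_2}\right)b_1(t)^{\mathsf{T}}p(t)=0$ for a.e.\ $t$, a.s. To pass from ``a.e.\ $t$'' to the single instant $t_0$ in the hypothesis, I would work with continuous versions: $t\mapsto b_1(t)^{\mathsf{T}}p(t)$ has continuous paths since $p$ is path-continuous and $b_1$ is continuous, and the optimal control admits the continuous feedback representation $u^*(t)=-\theta_i^{-1}b_1(t)^{\mathsf{T}}K_{\theta_i}(t)X^*(t)$, where $K_{\theta_i}$ solves the associated Riccati equation; hence two such continuous representatives of $u^*$ are indistinguishable. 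If $\theta_1\neq\theta_2$, then $b_1(t)^{\mathsf{T}}p(t)=0$ for a.e.\ $t$ a.s., thus for all $t$ a.s.\ by continuity, and therefore $u^*(t_0)=\tfrac{1}{2\theta_1}b_1(t_0)^{\mathsf{T}}p(t_0)=0$ almost surely, contradicting $\mathbb{P}(u^*(t_0)\neq 0)>0$. This forces $\theta_1=\theta_2$.

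The step I expect to be the main obstacle is the justification of the stationarity condition itself: Theorem \ref{thm:2.3} cannot be invoked verbatim, since Assumption \ref{assum:2.6} violates both the compactness of $U$ and the boundedness and growth requirements of Assumption \ref{assum:2.2}, which is exactly the reason this case demands separate treatment. By contrast, the $\theta$-independence of the adjoint is what makes the argument clean: one never needs to invert a matrix identity $b_1^{\mathsf{T}}(M_1-M_2)X^*=0$ against the law of $X^*(t)$, so no controllability or nondegeneracy of $X^*$ is required, and beyond setting up the correct linear-quadratic maximum principle and the routine bridging via continuous modifications I anticipate no further difficulty.
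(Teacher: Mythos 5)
Your proof is correct, but it follows a genuinely different route from the paper's. The paper does not touch the adjoint equations at all: it starts from the classical LQ result (cited from Bensoussan) that the optimal control is \emph{uniquely} given by the Riccati feedback $u^*(t)=-\theta^{-1}b_1(t)^{\mathsf{T}}F_\theta(t)X^*(t)$, applies It{\^o}'s formula to $F_{\theta_i}(t)X^*(t)$ to see that each satisfies the same linear (backward) dynamics with the same terminal value $RX^*(T)$, and then runs a conditional-expectation plus Gronwall argument to conclude $F_{\theta_1}X^*=F_{\theta_2}X^*$, hence $\theta_1 u^*=\theta_2 u^*$. You instead observe that the first-order adjoint BSDE in the LQ setting has coefficients and terminal data depending only on $X^*$, so $(p,q)$ is $\theta$-independent by BSDE uniqueness, and the stationarity condition $2\theta_i u^*=b_1^{\mathsf{T}}p$ forces $\theta_1=\theta_2$. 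The two arguments are secretly the same identity in disguise --- $p(t)=-2F_\theta(t)X^*(t)$, and the paper's Gronwall step is a hand-rolled proof of exactly the BSDE uniqueness you invoke abstractly --- but the trade-off is real: the paper's route needs no maximum principle at all (hence no worry about the unbounded control set and quadratic running cost, which is precisely why Theorem \ref{thm:2.3} is inapplicable here), whereas your route must justify the first-order necessary condition separately, as you correctly flag; this does go through, most cleanly via the convex variational argument (the cost is a Gateaux-differentiable convex functional of $u$ since the dynamics are affine and $R,S\succeq 0$), so it is a gap you have correctly identified and can close, not a flaw. One small point: you still end up importing the Riccati feedback representation to pass from the a.e.-in-$t$ stationarity identity to the specific instant $t_0$, so your proof is not fully independent of the explicit LQ solution either; the paper's proof quietly faces the same pointwise-at-$t_0$ issue and resolves it the same way, by identifying $u^*$ with its continuous feedback version.
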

\begin{proof}
For each $\theta=\theta_1$ and $\theta_2$, an optimal control for $(\mathcal{C}_{\theta})$ uniquely exists and we 
necessarily have 
\begin{equation}
\label{eq:2.9}
 u^*(t) = -\frac{1}{\theta}b_1(t)^{\mathsf{T}}F(t)X^*(t), \quad 0\le t\le T, 
\end{equation}
where $\{F(t)\}_{0\le t\le T}$ is a unique solution of the matrix Riccati equation 
\begin{equation}
\label{eq:2.10}
 \frac{d}{dt}F(t) + b_0^{\mathsf{T}}(t)F(t) + F(t)b_0(t) - \frac{1}{\theta}F(t)b_1(t)b_1(t)^{\mathsf{T}}F(t)+S(t) = 0, 
 \quad F(T)=R. 
\end{equation}
We refer to, e.g., Bensoussan \cite{ben:1992} for this result.  A simple application of It{\^o} formula then yields 
\[
 dF(t)X^*(t) = -(b_0(t)^{\mathsf{T}}F(t)+S(t))X^*(t)dt + F(t)\sigma_0(t)dW(t). 
\]

Let $F_1$ and $F_2$ be the solution of the Riccati equation \eqref{eq:2.10} corresponding to $\theta_1$ and $\theta_2$, 
respectively.  Then we have 
\[
 d(F_1(t)-F_2(t))X^*(t)=-b_0(t)^{\mathsf{T}}X^*(t)dt + (F_1(t)-F_2(t))\sigma_0(t)dW(t). 
\]
From $F_1(T)=F_2(T)$ it follows that 
\[
 (F_1(t)-F_2(t))X^*(t)=\mathbb{E}\left[\left.\int_t^Tb_0(s)^{\mathsf{T}}(F_1(s)-F_2(s))X^*(s)ds\right|\mathcal{F}(t)\right], 
\]
whence 
\[
 \mathbb{E}|(F_1(t)-F_2(t))X^*(t)|^2\le C\int_t^T\mathbb{E}|(F_1(s)-F_2(s))X^*(s)|^2ds 
\]
for some positive constant $C>0$. 
Therefore we have $F_1X^*=F_2X^*$ and so $\theta_1=\theta_2$ since 
$b_1(t_0)^{\mathsf{T}}F_1(t_0)X^*(t_0)\neq 0$ with positive probability. Thus the theorem follows. 
\end{proof}

As in the previous theorem, thanks to the explicit representation of the optimal controls, we have the 
following stability result:  
\begin{thm}
\label{thm:2.8}
Let Assumption $\ref{assum:2.6}$ hold. 
Let $\{u^*(t)\}_{0\le t\le T}\in\mathcal{U}$ be optimal for the problem $(\mathcal{C}_{\theta^*})$ for some $\theta^*>0$, and 
for each $n\in\mathbb{N}$ let $\{u_n(t)\}_{0\le t\le T}\in\mathcal{U}$ be optimal for $(\mathcal{C}_{\theta_n})$ for some $\theta_n>0$. 
Suppose that 
\[
 \mathbb{E}\int_0^T|u_n(t)-u^*(t)|^2dt\to 0, \quad n\to\infty, 
\]  
and that there exists a measurable set $E\subset [0,T]\times\Omega$ with positive measure satisfying 
\[
 u^*(t,\omega)\neq 0, \quad (t,\omega)\in E. 
\]
Then we have $\lim_{n\to\infty}\theta_n=\theta^*$. 
\end{thm}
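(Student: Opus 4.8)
The plan is to replace the maximum-principle argument of Theorem \ref{thm:2.5} by the explicit feedback representation \eqref{eq:2.9}, which is available in the linear quadratic setting. Writing $F_n$ and $F^*$ for the solutions of the Riccati equation \eqref{eq:2.10} associated with $\theta_n$ and $\theta^*$, and $X_n$, $X^*$ for the corresponding state processes, \eqref{eq:2.9} gives the pointwise identities $\theta_n u_n(t) = -b_1(t)^{\mathsf{T}}F_n(t)X_n(t)$ and $\theta^* u^*(t) = -b_1(t)^{\mathsf{T}}F^*(t)X^*(t)$. Introducing the auxiliary processes $Y_n := F_n X_n$ and $Y^* := F^* X^*$, I would reduce the whole theorem to the single convergence $Y_n \to Y^*$ in $L^2$: once that is known, subtracting the two identities yields $\theta_n u_n - \theta^* u^* = -b_1^{\mathsf{T}}(Y_n - Y^*) \to 0$, which is the analogue of the convergence $L_n \to L^*$ driving the proof of Theorem \ref{thm:2.5}.

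The convergence $Y_n \to Y^*$ is the central step, and the one I expect to be the main obstacle. The difficulty is that $F_n$ depends on $\theta_n$ through the nonlinear Riccati equation \eqref{eq:2.10} and need not stay bounded as $\theta_n$ approaches $0$, so one cannot pass to the limit in $F_n$ and $X_n$ separately. The way around this is to work with the product $Y_n$ directly: applying the It{\^o} formula exactly as in the proof of Theorem \ref{thm:2.7} shows that $Y_n$ solves the linear BSDE
\[
 dY_n(t) = -\left(b_0(t)^{\mathsf{T}}Y_n(t) + S(t)X_n(t)\right)dt + F_n(t)\sigma_0(t)dW(t), \quad Y_n(T) = RX_n(T).
\]
The crucial point is that the driver $-b_0^{\mathsf{T}}y - SX_n$ and the terminal value $RX_n(T)$ depend on $\theta_n$ only through $X_n$, and not explicitly; the $\theta_n$-dependent factor $F_n$ sits solely in the martingale integrand, which the stability estimates for BSDEs absorb automatically. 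Since $dX_n = (b_0 X_n + b_1 u_n)\,dt + \sigma_0 \,dW$ shares the initial condition $X_0$ and the diffusion coefficient $\sigma_0$ with $X^*$, a routine Gronwall argument gives $\sup_{0\le t\le T}\mathbb{E}|X_n(t)-X^*(t)|^2 \le C\,\mathbb{E}\int_0^T|u_n - u^*|^2\,dt \to 0$. Feeding this into the a priori estimates for BSDEs (as in \cite[Theorem 3.3, Chapter 7]{yon-zho:1999}), and noting that $Y^*=F^*X^*$ solves the same BSDE with $X^*$ in place of $X_n$, I would obtain $Y_n \to Y^*$ in $L^2$ with no control on $F_n$ required.

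With $Y_n \to Y^*$ established, the convergence of $\theta_n$ follows in two short steps. First, to bound $\{\theta_n\}$, the identity $\theta_n u_n = -b_1^{\mathsf{T}}Y_n$ and the continuity of $b_1$ on $[0,T]$ give $\theta_n^2\,\mathbb{E}\int_0^T|u_n|^2\,dt \le C\,\mathbb{E}\int_0^T|Y_n|^2\,dt$; since $Y_n\to Y^*$ keeps the right-hand side bounded while $\mathbb{E}\int_0^T|u_n|^2\,dt \to \mathbb{E}\int_0^T|u^*|^2\,dt > 0$ (positivity because $u^*\neq 0$ on the set $E$ of positive measure), I conclude $\sup_n\theta_n<\infty$. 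Second, I would use the decomposition
\[
 (\theta_n - \theta^*)u^* = -b_1^{\mathsf{T}}(Y_n - Y^*) - \theta_n(u_n - u^*),
\]
whose right-hand side tends to $0$ in $L^1$ thanks to $Y_n\to Y^*$, the boundedness of $\theta_n$, and $u_n\to u^*$ in $L^2$. Integrating the left-hand side over $E$ and invoking $\int_E|u^*|\,dt\,d\mathbb{P}>0$, exactly as in the closing step of Theorem \ref{thm:2.5}, gives $|\theta_n-\theta^*|\int_E|u^*|\,dt\,d\mathbb{P}\to 0$ and hence $\lim_{n\to\infty}\theta_n=\theta^*$, as claimed.
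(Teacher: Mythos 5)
Your proposal is correct and follows essentially the same route as the paper: both exploit the explicit feedback law \eqref{eq:2.9} and the key observation that the product $Y_n=F_nX_n$ satisfies a linear equation whose terminal datum and driver depend on $\theta_n$ only through $X_n$, so that $Y_n\to Y^*$ in $L^2$ follows from $X_n\to X^*$ without any control on $F_n$. The only (harmless) variations are that you phrase this via BSDE a priori estimates where the paper uses a conditional-expectation identity plus Gronwall, and that your closing decomposition tests against $u^*$ and therefore needs the preliminary bound $\sup_n\theta_n<\infty$, whereas the paper tests against $u_n$ and avoids that step.
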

\begin{proof}
By $C$ we denote a generic positive constant that may vary from line to line. 
Let $X^*(t)$ and $X_n(t)$ be the state processes corresponding to $u^*$ and $u_n$, respectively. 
Further, let $F^*$ and $F_n$ be the solution of the Riccati equation \eqref{eq:2.10} corresponding to $\theta^*$ and $\theta_n$, 
respectively, $n\in\mathbb{N}$. Then, as in the proof of Theorem \ref{thm:2.7}, 
\begin{align*}
 &F^*(t)X^*(t)-F_n(t)X_n(t) \\ 
 &=\mathbb{E}\left[\left.R(X^*(T)-X_n(T)) + \int_t^T b_0(s)^{\mathsf{T}}(F^*(s)X^*(s)-F_n(s)X_n(s))ds 
 \right|\mathcal{F}(t)\right], 
\end{align*}
whence by Gronwall inequality, 
\[
 \sup_{0\le t\le T}\mathbb{E}|F^*(t)X^*(t)-F_n(t)X_n(t)|^2\le C\mathbb{E}|X^*(T)-X_n(T)|^2\to 0, \quad n\to\infty. 
\]
Thus, 
\begin{align*}
 |\theta_n-\theta^*|\int_Eu_ndt\times d\mathbb{P}&\le |\theta^*|\mathbb{E}\int_0^T|u_n(t)-u^*(t)|dt \\ 
   &\quad + C\mathbb{E}\int_0^T|F^*(t)X^*(t)-F_n(t)X_n(t)|dt \to 0, \quad n\to\infty. 
\end{align*}
Consequently, $\theta_n\to\theta^*$, as required. 
\end{proof}

\section{Numerical method}\label{sec:4}

Here we propose a method for determining the weight parameter $\theta$ given observed 
data of optimal controls. To this end, recall that the value function $V$ for the problem 
$(\mathcal{C}_{\theta})$ is given by  
\begin{equation}
\label{eq:3.1}
 V(t,x;\theta) = \inf_{u\in\mathcal{U}_t}\mathbb{E}\left[\left. g(X(T)) 
 + \int_t^T(f(s,X(s)) + \theta |u(s)|^2)ds\right|X(t) = x\right],  
\end{equation}
for $(t,x)\in [0,T]\times\mathbb{R}^d$, 
where $\mathcal{U}_t$ is the set of all $U$-valued $\{\mathcal{F}(s)\}$-adapted processes 
$\{u(s)\}_{t\le s\le T}$ satisfying 
\[
 \mathbb{E}\int_t^T|u(s)|^2ds < \infty. 
\]
It is well-known that under Assumption \ref{assum:2.2} the value fuction
$V(t,x)\equiv V(t,x;\theta)$ is a unique continuous viscosity solution of 
the Hamilton-Jacobi-Bellman (HJB) equation 
\begin{equation}
\label{eq:3.2}
\left\{
\begin{split}
 &\partial_t V(t,x) + H(t,x,D_xV(t,x),D^2_xV(t,x))=0, \quad (t,x)\in [0,T)\times \mathbb{R}^d, \\
 &V(T,x)=g(x), \quad x\in\mathbb{R}^d, 
\end{split}
\right. 
\end{equation}
where 
\[
 H(t,x,p,M)=\inf_{u\in U}\left[b(t,x,u)^{\mathsf{T}}p 
 + \frac{1}{2}\mathrm{tr}(\sigma(t,x,u)\sigma(t,x,u)^{\mathsf{T}}M) 
 + f(t,x) + \theta|u|^2\right]
\]
for $(t,x,p,M)\in\mathbb{R}^d\times\mathbb{R}^d\times\mathbb{S}^d$ (see, e.g., \cite{fle-son:2006}).  
Under Assumption \ref{assum:2.6}, i.e., the case of the linear quadratic regulator problems, the value function 
$V$ is given by 
\begin{equation}
 V(t,x) = x^{\mathsf{T}}F(t)x + G(t), \quad (t,x)\in [0,T]\times\mathbb{R}^d, 
\end{equation}
where $F$ is as in \eqref{eq:2.10} and $G$ is the unique solution of 
\[
 \frac{d}{dt}G(t) + \mathrm{tr}(\sigma_0(t)^{\mathsf{T}}F(t)\sigma_0(t))=0, \quad G(T)=0. 
\] 

Then we have the following basic result: 
\begin{thm}
\label{thm:3.1}
Let $\{u^*(t)\}_{0\le t\le T}\in\mathcal{U}$ solves $(\mathcal{C}_{\theta^*})$ for some $\theta^*>0$, and  
$\{X^*(t)\}_{0\le t\le T}$ the corresponding state trajectory. 
Suppose that Assumption $\ref{assum:2.4}$ holds and $\mathbb{P}(u^*(t_0)\in \mathrm{int}(U)\setminus\{0\})>0$ for
 some $t_0\in [0,T]$.
Then, $\theta^*$ is a unique positive root of the equation
\begin{equation}
\label{eq:3.3}
 \Phi(\theta):=\mathbb{E}\left[g(X^*(T))+ \int_0^T(f(t,X^*(t)) + \theta |u^*(t)|^2)dt - V(0,X_0;\theta)\right]=0. 
\end{equation}
\end{thm}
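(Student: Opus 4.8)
The plan is to recognize that $\Phi(\theta)$ is nothing but the suboptimality gap incurred by the fixed control $u^*$ in the problem $(\mathcal{C}_{\theta})$, and then to read off both the existence and the uniqueness of the root from facts already in place. First I would note that the state process $X^*$ driven by $u^*$ through \eqref{eq:1.1} does not depend on $\theta$, so the first expectation in \eqref{eq:3.3} is exactly the objective of $(\mathcal{C}_{\theta})$ evaluated at $u^*$; I will write it $J_{\theta}[u^*]$ to display the $\theta$-dependence. Since $X_0$ is deterministic by Assumption \ref{assum:2.2}(i) (which underlies Assumption \ref{assum:2.4}), the quantity $V(0,X_0;\theta)$ is a constant and the outer expectation over it is vacuous. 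Hence
\[
 \Phi(\theta) = J_{\theta}[u^*] - V(0,X_0;\theta), \qquad \theta>0.
\]

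Next I would establish nonnegativity together with the existence of the root. Because $u^*\in\mathcal{U}=\mathcal{U}_0$ is admissible for every $(\mathcal{C}_{\theta})$ and $V(0,X_0;\theta)=\inf_{u\in\mathcal{U}_0}J_{\theta}[u]$ by \eqref{eq:3.1}, the inequality $\Phi(\theta)\ge 0$ holds for all $\theta>0$. At $\theta=\theta^*$ the control $u^*$ is optimal, so the infimum defining $V(0,X_0;\theta^*)$ is attained at $u^*$; this yields $\Phi(\theta^*)=0$, so $\theta^*$ is a positive root.

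For uniqueness I would use the equivalence that $\Phi(\theta)=0$ holds precisely when $J_{\theta}[u^*]=V(0,X_0;\theta)$, that is, precisely when $u^*$ attains the infimum in \eqref{eq:3.1} and is therefore optimal for $(\mathcal{C}_{\theta})$. Consequently, any positive root $\theta$ of \eqref{eq:3.3} makes $u^*$ optimal simultaneously for $(\mathcal{C}_{\theta^*})$ and $(\mathcal{C}_{\theta})$. Since Assumption \ref{assum:2.2} is in force and the hypothesis $\mathbb{P}(u^*(t_0)\in\mathrm{int}(U)\setminus\{0\})>0$ is assumed, the uniqueness result Theorem \ref{thm:2.3} applies verbatim and forces $\theta=\theta^*$, which closes the argument.

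The only genuinely delicate point is the reduction in the first paragraph, and in particular the identity $V(0,X_0;\theta^*)=J_{\theta^*}[u^*]$: it rests on Assumption \ref{assum:2.2}(i), which collapses both the conditioning $\{X(0)=X_0\}$ in \eqref{eq:3.1} and the outer expectation in \eqref{eq:3.3}, so that $\Phi$ is the scalar suboptimality gap rather than an averaged one. Once this is secured, the substance of the uniqueness has already been carried out in Theorem \ref{thm:2.3}; no further maximum-principle computation is required here, and the present statement amounts to repackaging that theorem as a root-finding criterion amenable to the numerical scheme of this section.
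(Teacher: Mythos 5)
Your proposal is correct and follows essentially the same route as the paper's own proof: identify $\Phi(\theta)$ as the suboptimality gap $J_{\theta}[u^*]-V(0,X_0;\theta)$ (using that $X_0$ is constant), deduce $\Phi(\theta^*)=0$ from optimality, and conclude that any other positive root would make $u^*$ optimal for a second problem, contradicting Theorem \ref{thm:2.3}. The extra observations you add (the $\theta$-independence of $X^*$ and the nonnegativity $\Phi(\theta)\ge 0$) are harmless elaborations of the same argument.
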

\begin{proof}
Since $X_0$ is a constant and $\{u^*(t)\}$ is optimal, clearly we have 
$V(0,X_0;\theta^*)=\inf_{u\in\mathcal{U}}J[u]=J[u^*]$, whence $\Phi(\theta^*)=0$. 
If $\theta^{\prime}>0$ satisfies $\Phi(\theta^{\prime})=0$, then $\{u^*(t)\}$ is also optimal for $(\mathcal{C}_{\theta^{\prime}})$. 
By Theorem \ref{thm:2.3}, we obtain $\theta^{\prime}=\theta^*$. 
\end{proof}

We have the same result in the case of the linear quadratic regulator problems. 
\begin{thm}
\label{thm:3.2}
Let $\{u^*(t)\}_{0\le t\le T}\in\mathcal{U}$ solves $(\mathcal{C}_{\theta^*})$ for some $\theta^*>0$, and  
$\{X^*(t)\}_{0\le t\le T}$ the corresponding state trajectory. 
Suppose that Assumption $\ref{assum:2.6}$ holds and $\mathbb{P}(u^*(t_0)\neq 0)>0$ for
 some $t_0\in [0,T]$.
Then, $\theta^*$ is a unique positive root of the equation $\Phi(\theta)=0$, defined as in $\eqref{eq:3.3}$. 
\end{thm}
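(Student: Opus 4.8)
The plan is to follow the proof of Theorem \ref{thm:3.1} almost verbatim, substituting the linear--quadratic uniqueness result Theorem \ref{thm:2.7} for Theorem \ref{thm:2.3}, with one genuine modification forced by Assumption \ref{assum:2.6}: the initial state $X_0$ is now random (merely square integrable) rather than a constant. The first step I would record is the identity
\[
 \inf_{u\in\mathcal{U}}\mathbb{E}\left[g(X(T))+\int_0^T\big(f(t,X(t))+\theta|u(t)|^2\big)dt\right]=\mathbb{E}[V(0,X_0;\theta)],
\]
valid for every $\theta>0$. In the linear--quadratic case this follows directly from the closed form $V(t,x;\theta)=x^{\mathsf{T}}F(t)x+G(t)$ and the feedback representation \eqref{eq:2.9}: the feedback law $u(t)=-\theta^{-1}b_1(t)^{\mathsf{T}}F(t)X(t)$ is admissible and attains the cost-to-go $V(0,X_0;\theta)$ for $\mathbb{P}$-almost every realization of $X_0$, so that averaging over the law of $X_0$ gives the displayed equality (the inequality $\geq$ always holding by conditioning on $X_0$ and the definition of the infimum).

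Granting this identity, $\Phi(\theta^*)=0$ is immediate: optimality of $\{u^*(t)\}$ for $(\mathcal{C}_{\theta^*})$ means that the bracketed expectation in \eqref{eq:3.3} at $\theta=\theta^*$ coincides with the infimum above, hence with $\mathbb{E}[V(0,X_0;\theta^*)]$, so the two contributions to $\Phi(\theta^*)$ cancel. For uniqueness, suppose $\theta'>0$ satisfies $\Phi(\theta')=0$. Read at $\theta=\theta'$, equation \eqref{eq:3.3} states precisely that the $(\mathcal{C}_{\theta'})$-cost of $u^*$ equals $\mathbb{E}[V(0,X_0;\theta')]$, which by the identity above is the optimal $(\mathcal{C}_{\theta'})$-value; consequently $\{u^*(t)\}$ is optimal for $(\mathcal{C}_{\theta'})$ as well. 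Since $\mathbb{P}(u^*(t_0)\neq 0)>0$, Theorem \ref{thm:2.7} then forces $\theta'=\theta^*$.

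The only step requiring care is the identity $\inf_u(\cdots)=\mathbb{E}[V(0,X_0;\theta)]$, which was trivial in Theorem \ref{thm:3.1} because $X_0$ was deterministic there. Here one must lean on the explicit linear--quadratic value function and its feedback optimal control to justify exchanging the infimum with the expectation over the law of $X_0$; the remainder of the argument is an unchanged reprise of the proof of Theorem \ref{thm:3.1}.
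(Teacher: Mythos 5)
Your proposal is correct and follows essentially the same route as the paper: the paper likewise uses the explicit feedback form \eqref{eq:2.9} to argue that $u^*$ is conditionally optimal given $X_0$, deduces $J[u^*]=\mathbb{E}[V(0,X_0;\theta^*)]$ by the tower property, and then repeats the argument of Theorem \ref{thm:3.1} with Theorem \ref{thm:2.7} in place of Theorem \ref{thm:2.3}. Your explicit remark that the identity $\inf_u J[u]=\mathbb{E}[V(0,X_0;\theta)]$ is needed for every $\theta>0$ (not only $\theta^*$) in the uniqueness step is a point the paper leaves implicit, but it is the same argument.
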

\begin{proof}
Since $\{u^*(t)\}$ is necessarily given by \eqref{eq:2.9}, it is clear that 
the process $\{u^*(t)|_{X_0=x}\}$ is optimal for any $x\in\mathbb{R}^d$. 
Moreover, the process $\{X^*(t)|_{X_0=x}\}$ is the state process corresponding to $\{u^*(t)|_{X_0=x}\}$. 
Thus, 
\begin{align*}
 J[u^*] &= \mathbb{E}\left[X^*(T)^{\mathsf{T}}RX^*(T)+\int_0^T\left(X^*(t)^{\mathsf{T}}P(t)X^*(t) 
  + \theta^*|u^*(t)|^2\right)dt\right] \\ 
  &= \mathbb{E}\left[\mathbb{E}\left[\left. X^*(T)^{\mathsf{T}}RX^*(T)+\int_0^T\left(X^*(t)^{\mathsf{T}}P(t)X^*(t) 
  + \theta^*|u^*(t)|^2\right)dt\right|X_0\right]\right] \\ 
  &=\mathbb{E}[V(0,X_0;\theta^*)]. 
\end{align*}
The rest of the proof can be done by the same way as in the proof of Theorem \ref{thm:3.1}. 
\end{proof}
 
Now, suppose that the $N$ independent samples 
$\{u^{(j)}(t_i), X^{(j)}(t_i)\}$, $i=0,\ldots,n$, $j=1,\ldots,N$, 
of optimal control process for $(\mathcal{C}_{\theta^*})$ at time $t_i$ and the corresponding 
state at time $t_i$ are available, where $0=t_0<\cdots < t_n=T$. 
An inverse control problem is then to determine the 
unknown $\theta^*$ from the observations. 
Our approach is to focus on the following problem: 
\begin{prob2}
Find a positive root of the equation 
\[
 \frac{1}{N}\sum_{j=1}^N\left\{g(X^{(j)}(T))+\sum_{i=0}^{n-1}(f(t_i,X^{(j)}(t_i))+\theta |u^{(j)}(t_i)|^2)
 (t_{i+1}-t_i) - V(0,X^{(j)}(0);\theta)\right\} =0. 
\]
\end{prob2}
In views of the strong law of large number and Theorems \ref{thm:3.1} and \ref{thm:3.2}, 
we can obtain an approximate solution of the inverse problem 
by solving the problem $(\mathcal{I})$ for sufficiently large $N$ and $n$.

\begin{ex}
\label{ex:3.3}
Consider the case where the state is described by the one-dimensional equation 
\[
 dX(t) = u(t)dt+\frac{1}{10} dW(t)
\]
with initial condition $X_0$ having the standard normal distribution, and 
the control objective $J[u]$ is given by 
\[
 J[u] = \mathbb{E}\int_0^1\left(10|X(t)|^2+\theta|u(t)|^2\right)dt, 
\] 
where $U=\mathbb{R}$.  
This problem is of course a particular case of the linear quadratic regulator ones. 
The value function $V$ is explicitly given by 
$V(t,x;\theta)=F(t;\theta)x^2 + G(t;\theta)$, where 
$F(t;\theta)=\sqrt{10\theta}\tanh ((1-t)\sqrt{10/\theta})$ and 
$G(t;\theta)=(\theta/100)\log(\cosh((1-t)\sqrt{10/\theta}))$. 
The unique optimal control $u^*(t)$ is given by 
\[
 u^*(t) = -\sqrt{\frac{10}{\theta}}\tanh\left(\sqrt{\frac{10}{\theta}}(1-t)\right)X^*(t). 
\]
To test our approach, we independently generate the samples $(X^{(j)}(t_i), u^{(j)}(t_i))$, 
$j=1,\ldots,10000$, $i=0,\ldots,1000$, of the optimal state and control for $\theta=1$, 
where $t_i=i/1000$, and consider these samples as observed data. 
We solve the root-finding problem by minimizing 
\[
 \frac{1}{10000}\left|\sum_{j=1}^{10000}\sum_{i=0}^{999}\left(10|X^{(j)}(t_i)|^2+ \theta|u^{(j)}(t_i)|^2\right)
 \Delta t - V(0,X_0^{(j)};\theta)\right|  
\]
over $\theta\in [0.0001,10]$. 
The estimated $\theta$ over $100$ trials has the mean $0.9971$ and the standard deviation 
$4.4181\times 10^{-4}$.    
\end{ex}

In most of nonlinear problems, analytical solutions of HJB equations are rarely available. 
So we need to numerically solve \eqref{eq:3.3} to approximate the value functions. 
Existing numerical methods applicable to (\ref{eq:3.3}) include the finite difference methods 
(see, e.g., Kushner and Dupuis \cite{kus-dup:2001} and 
Bonnans and Zidani \cite{bon-zid:2003}), the finite-element like methods 
(see, e.g., Camilli and Falcone \cite{cam-fal:1995} and 
Debrabant and Jakobsen \cite{deb-jak:2012}), 
the kernel-based collocation methods (see, e.g., Kansa \cite{kan:1990a,kan:1990b}, 
Huang et.al \cite{hua-etal:2006}, and Nakano \cite{nak:2017}), 
and the regression-based methods (see, e.g., Fahim et al.~\cite{fah-tou-war:2011},  E et al.~\cite{e-etal:2017}, 
Sirignano and Spiliopoulos \cite{sir-spi:2018}, and the references therein). 
It is well-known that the use of the finite difference methods is limited to low dimensional problems since 
the number of the spatial grids points has an exponential growth in dimension. 
In the kernel-based collocation methods, we seek an approximate solution 
of the form of a linear combination of a radial basis function 
(e.g., multiquadrics in the Kansa's original work) by solving finite dimensional linear equations. 
In general, this procedure allows for a simpler numerical implementation compared to the 
finite-element like methods and the regression-based methods. 
The regression-based methods, in particular the ones with neural networks, are prominent in high dimensional problems 
although they are computationally expensive. 

In Example \ref{ex:3.4} below, we will deal with a population control problem having a 3-dimensional state space. 
We adopt the kernel-based collocation methods to compute the value function for that problem. 

The kernel-based collocation methods are obtained by the interpolations with positive definite kernels  
applied backward recursively in time.   
Given a points set 
$\Gamma=\{x^{(1)},\ldots,x^{(N)}\}\subset\mathbb{R}^d$ such that 
$x^{(j)}$'s are pairwise distinct, and a positive definite function  
$\Phi:\mathbb{R}^d\to\mathbb{R}$, the function  
\[
 \sum_{j=1}^N(A^{-1}\psi|_{\Gamma})_j\Phi(x-x^{(j)}), \quad x\in\mathbb{R}^d, 
\]
interpolates $\psi$ on $\Gamma$. Here, $A=\{\Phi(x^{(j)}-x^{(\ell)})\}_{j,\ell=1,\ldots,N}$, 
$\psi|_{\Gamma}$ is the column vector composed of $\psi(x_j)$, 
$j=1,\ldots,N$, and 
$(z)_j$ denotes the $j$-th component of $z\in\mathbb{R}^N$.  
Thus, with time grid $\{t_0,\ldots,t_n\}$ such that $0=t_0<\cdots <t_n=T$, 
the function $\tilde{V}(t_n,\cdot)$ defined by 
\[
 \tilde{V}(t_n,x) = \sum_{j=1}^N(A^{-1}\tilde{V}_n)_j\Phi(x-x^{(j)}), \quad x\in\mathbb{R}^d
\]
approximates $g$, where $\tilde{V}_n=g|_{\Gamma}$. 
Then, for any $k=0,1,\ldots,n-1$, with a vector 
$\tilde{V}_{k+1}\in\mathbb{R}^N$ of an approximate solution at 
$\{t_{k+1}\}\times\Gamma$, we set  
\begin{align*}
 \tilde{V}(t_{k+1},x) &= \sum_{j=1}^N(A^{-1}\tilde{V}_{k+1})_j\Phi(x-x^{(j)}), \quad x\in\mathbb{R}^d, \\ 
 \tilde{V}_k &= \tilde{V}_{k+1} + (t_{k+1}-t_k)H_{k+1}(v_{k+1}^h), 
\end{align*}
where $H_{k+1}(\tilde{V}_{k+1}) = (H_{k+1,1}(\tilde{V}_{k+1}),\ldots,H_{k+1,N}(\tilde{V}_{k+1}))\in\mathbb{R}^N$ 
with
\[
 H_{k+1,j} = H(t_{k+1},x^{(j)}, D\tilde{V}(t_{k+1},x^{(j)}), D^2\tilde{V}(t_{k+1},x^{(j)})). 
\]
It should be mentioned that in view of the accuracy of interpolation, 
it is necessary to take $\Gamma$ densely. In particular, as the spatial dimension increases, 
$N$ must also become larger. This creates difficulties in numerically solving the linear equations 
$A\xi = \psi|_{\Gamma}$ or in computing the inverse matrix $A^{-1}$. 
On the other hand, basically there are no restrictions on the structure of $\Gamma$ in implementing the kernel-based 
collocation methods. 
As a result, they are useful in multi but relatively low dimensional problems. 
It should also be noted that the method above can be described in a matrix form. See Nakano \cite{nak:2020} for details.

\begin{ex}
\label{ex:3.4}
Consider the following simple SIR (Susceptible-Infectious-Recovery)  
epidemic model with vaccination studied in   
Rachah and Torres \cite{rac-tor:2015}: 
\begin{align*}
 \frac{dS(t)}{dt} &= -\beta S(t)I(t) - u(t) S(t), \\ 
 \frac{dI(t)}{dt} &=  \beta S(t)I(t) - \mu I(t), \\ 
 \frac{dR(t)}{dt} &= \mu I(t) + u(t) S(t),  
\end{align*}
with initial conditions $S(0)=0.95$, $I(0)=0.05$, $R(0)=0$, where $\beta=0.2$ and $\mu=0.1$. 
The control objective is given by 
\[
 J[u] = \int_0^{10}(10I(t) + \theta u^2(t))dt  
\]
with $U=[0,1]$. 
We assume that $\theta=1$ is a true parameter and solve the corresponding control problem 
by the kernel-based collocation method to obtain an approximate value function $\tilde{V}$,  nearly optimal trajectories 
$S^*(t_i)$, $I^*(t_i)$, $R^*(t_i)$, and a nearly optimal control $u^*(t_i)$, 
where $t_i=i/1000$, $i=0,1,\ldots,10000$. 
Specifically, the kernel-based method is performed with the 
following ingredients: the Gaussian kernel with $\alpha=1$ and 
the collocation points $\Gamma$ consisting of $8^3$ uniform grids points in 
$[S(0)-0.5, S(0)+0.5]\times [I(0)-0.5, I(0)+0.5]\times [R(0)-0.5,R(0)+0.5]$ including the boundary.   
With this method, we generate $100$ independent copies $X^{(j)}(t_i)$, $j=1,\ldots,100$, of 
$(S^*(t_i), I^*(t_i), R^*(t_i))+0.01\times Z_i$, 
$i=0,1,\ldots,n$, where $Z=(Z_0,\ldots,Z_n)$ is an $(n+1)\times 3$-dimensional Gaussian vector with all components being 
independent of each other and having zero mean and unit variance. 
Then we consider these samples with noises as observed data.  
As in Example \ref{ex:3.3}, we solve the root-finding problem $(\mathcal{I})$ by minimizing 
\[
 \frac{1}{100}\left|\sum_{j=1}^{100}\sum_{i=0}^{9999}\left(10|X^{(j)}(t_i)|^2+ \theta|u^{(j)}(t_i)|^2\right)
 \Delta t - \tilde{V}(0,X_0^{(j)};\theta)\right|  
\]
subject to $\theta\in [0.0001,10]$. 
The estimated $\theta$ over $100$ trials has the mean $0.9496$ and the standard deviation 
$0.0558$.     
\end{ex}

\section{Conclusion}\label{sec:5}

We study the inverse problem of the stochastic control of general diffusions with performance index 
\[
 \mathbb{E}\left[g(X(T)) + \int_0^T\left(f(t,X(t)) + \theta |u(t)|^2\right)dt\right]. 
\]
Precisely, we formulate the inverse problem as the one of determining the weight parameter $\theta>0$ 
given the dynamics, $g$, $f$, and an optimal control process. 
Under mild conditions on the drift, the volatility, $g$, and $f$, and under the assumption that the optimal control 
belongs to the interior of the control set, we show that our inverse problem is well-posed. 
It should be noted that whether the latter assumption holds true or not is easily confirmed in practice.  
Then, with the well-posedness, we reduce the inverse problem to some root finding problem of the expectation of 
a random variable involved with the value function, which has a unique solution. 
Based on this result, we propose a numerical method for our inverse problem by replacing the expectation above with 
arithmetic mean of observed optimal control processes and the corresponding state processes. 
Several numerical experiments show that the numerical method recovers the unknown $\theta$ with high accuracy. 
In particular, with the help of the kernel-based collocation method for Hamilton-Jacobi-Bellman equations, 
our method for the inverse problems still works well even when an explicit form of the value function is unavailable. 

Possible future studies include the well-posedness when the function $g$ or $f$ is also unknown, and 
numerical methods under non-uniqueness of the inverse problems. 
In the latter issue, we may need to use an additional criterion to determine unknowns from multiple candidates such as 
the maximum entropy principle adopted in \cite{zie-etal:2008}. 



\subsection*{Acknowledgements}

This study is supported by JSPS KAKENHI Grant Number JP17K05359.

\bibliographystyle{plain}
\bibliography{../mybib}

\end{document}